\documentclass[12pt,reqno,twoside]{amsart}

\usepackage{amssymb,amsmath,amscd,enumerate,verbatim}
\usepackage[pagebackref=true,colorlinks=true,linkcolor=blue,citecolor=blue]{hyperref}

\usepackage[latin1]{inputenc}
\usepackage{color}
\usepackage{graphicx}
\usepackage{tikz}

\usepackage[all]{xy}
\usepackage{pstricks, pst-3d}
\newcommand{\mm}{\mathfrak m}

\newcommand{\kk}{\mathrm k}

\DeclareMathOperator{\pnt}{\raise 0.5mm \hbox{\large\bf.}}

\DeclareMathOperator{\astab}{astab}
\DeclareMathOperator{\diam}{diam}

\DeclareMathOperator{\depth}{depth}

\DeclareMathOperator{\Ass}{Ass}

\DeclareMathOperator{\dist}{dist}

\DeclareMathOperator{\supp}{supp}

\let\phi=\varphi
\let\:=\colon

\newtheorem{thm}{\bf Theorem}[section]
\newtheorem{lem}[thm]{\bf Lemma}

\theoremstyle{definition}
\newtheorem{defn}[thm]{\bf Definition}

\theoremstyle{plain}
\newtheorem*{thm*}{Theorem}
\newtheorem*{lem*}{Lemma}
\newtheorem*{cor*}{Corollary}
\newtheorem*{claim*}{Claim}
\newtheorem*{defn*}{Definition}

\theoremstyle{remark}
\newtheorem{rem}[thm]{Remark}

\numberwithin{equation}{section}
\title{Associated primes of powers of closed neighborhood ideals and diameters of graphs}

\author{Ha Thi Thu Hien}
\address{Foreign Trade University, 91 Chua Lang, Hanoi, Vietnam}
\email{thuhienha504@gmail.com}
\author{Thanh Vu}
\address{Institute of Mathematics, VAST, 18 Hoang Quoc Viet, Hanoi, Vietnam}
\email{vuqthanh@gmail.com}
\thanks{}
\date{\today}
\subjclass[2020]{13C15, 13F55, 05E40}
\keywords{Associated prime, closed neighborhood ideal, power of ideal, diameter of graph}

\begin{document}

\begin{abstract} Let $G$ be a simple connected graph and $t \ge 2$ an integer. We prove that if the maximal homogeneous ideal is an associated prime of the $t$th power of the closed neighborhood ideal of $G$, then the diameter of $G$ is at most $7t - 8$. We further show that this bound is sharp for all $t \ge 2$.
\end{abstract}

\maketitle

\section{Introduction} 
Let $G$ be a finite simple graph with vertex set $V(G) = [n] = \{1, \ldots, n\}$ and edge set $E(G)$. For a vertex $i \in V(G)$, the \emph{neighborhood} of $i$ is defined as 
\[
N_G(i) = \{ j \mid \{i, j\} \in E(G) \},
\]
and its \emph{closed neighborhood} is $N_G[i] = N_G(i) \cup \{i\}$. Let $S = \kk[x_1, \ldots, x_n]$ be the standard graded polynomial ring over a field $\kk$. For any subset $W \subseteq [n]$, we denote the squarefree monomial $x_W = \prod_{j \in W} x_j$. The \emph{closed neighborhood ideal} of $G$, denoted by $NI(G)$, is defined by:
\[
NI(G) = ( x_{N_G[i]} \mid i = 1, \ldots, n ).
\]

While the combinatorics of closed neighborhood complexes is a classical subject \cite{B, L}, the algebraic study of closed neighborhood ideals is comparatively recent \cite{CJRS, SM, HS, NBR, NQ, NQBM}. In \cite{HV}, Hien and Vu investigated the associated primes of the second power of $NI(G)$ and  proved that if the maximal homogeneous ideal $\mathfrak{m} = (x_1, \ldots, x_n)$ is an associated prime of $S/NI(G)^2$, then $\operatorname{diam}(G) \le 6$, and this bound is sharp. This result establishes a compelling link between the algebraic properties of neighborhood ideals and the diameter of the underlying graph. In this work, we generalize this connection to arbitrary powers of the ideal. We recall the definition of the diameter of a graph and state our main result.

\begin{defn}
Let $G$ be a simple graph. For vertices $u, v \in V(G)$, the \emph{distance} between $u$ and $v$, denoted by $\operatorname{dist}_G(u, v)$, is the length of the shortest path between them; if no such path exists, then $\operatorname{dist}_G(u, v) = \infty$. The \emph{diameter} of $G$, denoted by $\operatorname{diam}(G)$, is the maximum distance between any two vertices of $G$.
\end{defn}

\begin{thm}\label{thm_diam}
Let $G$ be a simple connected graph and $t \ge 2$ be an integer. Assume that $\mathfrak{m}$ is an associated prime of $S/NI(G)^t$. Then 
\[
\operatorname{diam}(G) \le 7t - 8,
\]
and this bound is sharp.
\end{thm}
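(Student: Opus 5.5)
The plan is to use the standard degree-complex/witness description of associated primes of monomial ideals. The maximal ideal $\mathfrak m$ is in $\Ass(S/NI(G)^t)$ exactly when there is a monomial $f \notin NI(G)^t$ with $x_j f \in NI(G)^t$ for every $j \in [n]$. For each $j$ we therefore get a factorization
\[
x_j f = g_j \, x_{N[i_{j,1}]} \cdots x_{N[i_{j,t}]}
\]
with $g_j$ a monomial. Write $F_j = \{i_{j,1}, \ldots, i_{j,t}\}$ for the multiset of centers used for $j$. Since $f \notin NI(G)^t$, the variable $x_j$ must be consumed by the product of neighborhoods. Hence $j \in N[i]$ for some $i \in F_j$, that is, $\dist(j, F_j) \le 1$. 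The aim is to show that all the sets $F_j$ lie within a region of bounded radius, which gives the diameter bound.

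\emph{Key step (bounding how far centers can be from each other).} Fix two vertices $u, v$ at maximal distance $d = \diam(G)$, and compare $F_u$ and $F_v$.

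First I would prove a \emph{transfer lemma}. Suppose $a \in F_u$ and $x_a$ divides $f$ to a high enough power, or more generally $f$ is divisible by $x_{N[a]}$. Then one can exchange centers between factorizations. If some $F_v$ had all its centers far from a center of $F_u$, we could splice the factorizations: take $t-1$ neighborhoods from one factorization and one from the other, with the extra variable absorbed. This would show $f \in NI(G)^t$, a contradiction.

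Concretely, I would show that $f$ is divisible by $x_{N[a]}$ for every center $a$ not adjacent to the witnessing vertex $j$. Indeed $x_{N[a]}$ divides $x_j f$ and $j \notin N[a]$, so it divides $f$.

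A counting argument then follows. The product of $t$ neighborhoods divides $x_u f$, so $f$ contains at least $t-1$ "full" neighborhoods, coming from centers of $F_u$ whose neighborhoods avoid $u$. If $t-1$ disjoint neighborhood products, with centers pairwise at distance $\ge 3$, all divide $f$, then $f \in NI(G)^{t-1}$ with room to spare. Such an $f$ is only saved from lying in $NI(G)^t$ by overlaps.

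Next, analyse paths. Take a shortest path $u = p_0, \ldots, p_d = v$ and look at the witnesses $F_{p_k}$ along it. Each $F_{p_k}$ has a center within distance $1$ of $p_k$. Because $f \notin NI(G)^t$, any $t$ centers whose closed neighborhoods are pairwise disjoint and all divide $f$ give a contradiction. By the previous step, centers far from the witnessing vertex have neighborhoods dividing $f$. So along the path we can pick centers at mutual distance $\ge 3$. The target is to show that at most $t-1$ such "independent" blocks fit, and that each block covers a path segment of length about $7$. The constant $7t-8$ should come from this segment analysis: one segment contributes $-8+7$ at the ends (the $t=2$ case gives $6$, matching \cite{HV}), and each additional power adds $7$.

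I expect the main obstacle to be making the per-step constant exactly $7$. This requires a careful accounting of how one factorization's centers within distance $1$ of $p_k$, together with multiplicities of variables in $f$, allow at most one new disjoint neighborhood per $7$ steps. I would first try induction on $t$. Remove from $f$ a neighborhood $x_{N[a]}$ near the end $u$ of the path and show that $f/x_{N[a]}$ witnesses $\mathfrak m$-type behaviour for $t-1$ on the vertices at distance $\ge 7$ from $u$. This localized version of the statement would need to be formulated and would be the induction hypothesis.

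\emph{Sharpness.} I would generalize the \cite{HV} example for $t=2$, a graph of diameter $6$. The construction glues $t-1$ copies of the diameter-$6$ gadget along a path, overlapping so that consecutive copies share one vertex; this gives diameter $6 + 7(t-2) = 7t-8$. I would then exhibit an explicit $f$, namely the product of the gadget witnesses divided by shared parts, and check directly that $x_j f \in NI(G)^t$ for all $j$ while $f \notin NI(G)^t$. For the second condition I would use a degree or weight argument on a suitable linear functional.
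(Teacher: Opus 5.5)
\textbf{There is a genuine gap in the upper bound.} The claims that carry the whole argument are stated as targets but never proved: that along a geodesic you can select centers with pairwise disjoint closed neighborhoods all dividing $f$, that each such center accounts for a segment of length $7$, and the ``localized'' induction hypothesis, which you never formulate. The one concrete mechanism you offer does not produce them. A center of $F_{p_k}$ within distance $1$ of $p_k$ is, by definition, one with $p_k\in N[a]$. Your observation $x_{N[a]}\mid f$, however, applies only to centers with $p_k\notin N[a]$, and these may be far from $p_k$. If $p_k\notin\supp(f)$, the nearby center's monomial $x_{N[a]}$ does not divide $f$ at all. (Similarly, several centers of $F_u$ can contain $u$, so ``at least $t-1$ centers avoid $u$'' is not justified.)

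The missing idea, which is the core of the paper's proof, is to stop tracking individual factorizations and pass to $U=\supp(f)$ and $W=\{w : N[w]\subseteq U\}$.
\begin{itemize}
\item For $u\in U$, every center $i$ of a factorization of $x_uf$ satisfies $N[i]\subseteq\supp(x_uf)=U$, so $i\in W$. Since $f\notin NI(G)^t$, one of these centers has $u\in N[i]$. Hence $U=N[W]$.
\item For $v\notin U$, $\deg_v(x_vf)=1$ forces the center whose neighborhood contains $v$ to satisfy $N[i]\setminus\{v\}\subseteq U$, so $v$ has a neighbor in $U$. Hence $N[U]=V(G)$, and every vertex lies within distance $2$ of $W$.
\item Vertices of $W$ with pairwise disjoint closed neighborhoods give a squarefree product supported in $U$, which divides $f$. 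Hence there are at most $t-1$ of them.
\end{itemize}

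The constant $7$ is then $3$ (the spacing that forces disjoint closed neighborhoods) plus $4$ (the radius-$2$ covering on both ends). The paper combines these facts by splitting $G[W]$ into components $W_1,\dots,W_s$. It bounds $\diam(G)\le\sum_i(\diam(W_i)+4)+(s-1)$, uses $\sum_i(\lfloor\diam(W_i)/3\rfloor+1)\le t-1$, and optimizes.

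A direct version avoids any bookkeeping across components. Take $w,w'\in W$ at maximal distance $L$. Using the $k$-th vertex of a $w$--$w'$ geodesic and a point of $W$ within distance $2$ of it, the values $\dist(w,z)$ for $z\in W$ meet every window $[k-2,k+2]$ with $0\le k\le L$. So you can greedily pick $z_0=w,z_1,\dots,z_m\in W$ whose distances from $w$ increase by between $3$ and $7$ at each step, with $\dist(w,z_m)\ge L-2$. These points are pairwise at distance at least $3$, so $m\le t-2$. This gives $L\le 7t-12$ and $\diam(G)\le L+4\le 7t-8$.

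\textbf{The sharpness construction also fails as described.} If consecutive copies of the diameter-$6$ gadget share a vertex, the diameter is at most $6(t-1)$, which is smaller than $7t-8$ for $t\ge3$. The extra $1$ per copy comes from joining the leaf $x_{i,12}$ of one copy to the leaf $x_{i+1,1}$ of the next by an edge, as the paper does. The witness is then simply $f=f_1\cdots f_{t-1}$ with $f_i=x_{i,2}\cdots x_{i,11}$. The fact $f\notin NI(G)^t$ follows because the sets $N[\supp(f_i)]$ are pairwise disjoint and $f_i\notin NI(G)^2$; no weight functional is needed.
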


We note that, unlike the case $t = 2$, a disconnected graph $G$ may satisfy $\mathfrak{m} \in \operatorname{Ass}(S/NI(G)^t)$ when $t \geq 3$. Indeed, $\mm$ is an associated prime of $S/NI(G)^t$ if and only if $\depth(S/NI(G)^t) = 0$. Assume that $G = G_1 \cup \cdots \cup G_s$, where $G_i$ are the connected components of $G$. It follows from \cite[Theorem 1.1]{NV} that $\mm$ is an associated prime of $S/NI(G)^t$ if and only if there exist positive integers $a_1, \ldots, a_s$ such that $a_1 + \cdots + a_s = t + s - 1$ and $\mm_i$ is an associated prime of $S_i/NI(G_i)^{a_i}$, where $S_i$ and $\mm_i$ are the polynomial ring and the maximal homogeneous ideal on the variables corresponding to the vertices of $G_i$, respectively. Hence, when $t \geq 3$ and $G$ is disconnected, our theorem can be applied to bound the diameters of the connected components of $G$. The idea of bounding the diameter of a graph under the condition that $\mathfrak{m}$ is an associated prime of a power of the closed neighborhood ideal is motivated by a result of Hien, Lam, and Trung \cite[Proposition 2.6]{HLT} on the associated primes of powers of edge ideals of graphs.

The proof technique extends the framework established by Hien and Vu \cite{HV}. In the following section, we provide the necessary notation and prove our main result.

\section{Associated primes of powers of closed neighborhood ideals}
Throughout this paper, we denote by $S = \kk[x_1, \ldots, x_n]$ the polynomial ring over a field $\kk$, and by $\mm = (x_1, \ldots, x_n)$ its maximal homogeneous ideal. For a nonzero monomial $f \in S$ and an index $i$ with $1 \leq i \leq n$, we denote by $\deg_i(f)$ the largest exponent $r$ such that $x_i^r$ divides $f$. The \emph{support} of $f$, denoted by $\supp(f)$, is the set of all variables of $S$ that divide $f$. For an $S$-module $M$, the notation $\Ass(M)$ denotes the set of associated primes of $M$. By \cite[Lemma 2.3]{HV}, we have the following.

\begin{lem}\label{lem_quotient}
Let $J$ be a squarefree monomial ideal. Then $\mm$ is an associated prime of $S/J^t$ if and only if there exists a monomial $f$ such that $\deg_i(f) < t$ for all $i$ and $J : f = \mm$.
\end{lem}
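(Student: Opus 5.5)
I will read the condition in the statement as $J^t : f = \mm$; as printed, $J : f = \mm$ would concern $\Ass(S/J)$ rather than $\Ass(S/J^t)$, and the exponent $t$ in the degree bound only makes sense for $J^t$. The plan is to reduce to the standard description of associated primes of monomial ideals, and then observe that the degree bound $\deg_i(f) < t$ holds automatically for any witness $f$, because $J$ is squarefree.

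\emph{Step 1 (monomial witnesses).} For any monomial ideal $I \subseteq S$, every associated prime of $S/I$ has the form $I : f$ for some monomial $f$. This is the standard multigraded fact: $S/I$ is $\mathbb{Z}^n$-graded, so its associated primes are annihilators of homogeneous elements, and homogeneous elements are scalar multiples of monomials. Applying this with $I = J^t$ gives
\[
\mm \in \Ass(S/J^t) \iff J^t : f = \mm \text{ for some monomial } f .
\]
In particular, the direction ``$\Leftarrow$'' of the lemma is immediate, since $J^t : f = \mm$ exhibits $\mm$ as the annihilator of the class of $f$ in $S/J^t$.

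\emph{Step 2 (the degree bound).} For ``$\Rightarrow$'', take a monomial $f$ with $J^t : f = \mm$; I will show that $\deg_i(f) \le t-1$ for every $i$. Suppose instead that $\deg_i(f) \ge t$ for some $i$. Since $x_i \in \mm = J^t : f$, we have $x_i f \in J^t$. Hence
\[
x_i f = w\, m_1 \cdots m_t
\]
for some monomial $w$ and minimal generators $m_1, \dots, m_t$ of $J$. Because $J$ is squarefree, each $m_k$ has $x_i$-degree at most $1$, so $\deg_i(m_1 \cdots m_t) \le t$. On the other hand, $\deg_i(x_i f) \ge t+1$, so $x_i$ divides $w$. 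It follows that
\[
f = (w/x_i)\, m_1 \cdots m_t \in J^t,
\]
which forces $J^t : f = S \ne \mm$, a contradiction. Hence every witness $f$ satisfies $\deg_i(f) < t$ for all $i$, which completes the plan.

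The only step requiring any care is the counting in Step 2. I expect the main pitfall to be trying instead to lower the exponent of $f$ by dividing out $x_i$: the equality $J^t : f = J^t : (f/x_i)$ can fail, because $x_i \cdot (f/x_i) = f \notin J^t$. Arguing directly that a witness cannot have $\deg_i(f) \ge t$ avoids this issue entirely.
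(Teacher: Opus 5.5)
Your proof is correct, and your reading of the condition as $J^t : f = \mm$ is the intended one. The paper uses the lemma in exactly that form in the proof of Theorem~\ref{thm_diam} (``there exists a monomial $f$ such that $\mm = NI(G)^t : f$''), so the printed $J : f$ is a typo. The paper gives no argument of its own and simply imports the statement from \cite[Lemma 2.3]{HV}, so your write-up is a self-contained proof. It rests on two facts. First, associated primes of a monomial quotient are colon ideals by monomials, which follows from the $\mathbb{Z}^n$-grading. Second, because $J$ is squarefree, every minimal generator of $J^t$ has $x_i$-degree at most $t$, so a witness with $\deg_i(f) \ge t$ would already lie in $J^t$. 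Your route also gives a slightly stronger conclusion than the lemma: \emph{every} monomial $f$ with $J^t : f = \mm$ satisfies the degree bound, not just some witness. The same computation generalizes: for an arbitrary monomial ideal $I$, any monomial $f$ with $I : f = \mm$ has $\deg_i(f) < \rho_i$, where $\rho_i$ is the largest $x_i$-degree among the minimal generators of $I$. Your closing caution is also right: for a witness $f$ one has $x_i \notin J^t : (f/x_i)$, so dividing out $x_i$ never yields another witness.
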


Before proving the main theorem, we construct a graph $G_t$ with $\diam(G_t) = 7t - 8$ and $\mm \in \Ass(S/NI(G_t)^{t+1})$ for all $t \ge 1$. The construction is based on \cite[Example 2.10]{HV}. We recall the graph described there as follows. Let $G = G_1$ be a graph on $12$ vertices labeled $1, \ldots, 12$, with edges 
\begin{align*}
\{1,2\},\{2,3\},\{2,5\},\{3,4\},\{3,8\},\{4,6\},\{4,10\},&\{5,6\},\{5,9\},\\
\{6,7\},\{7,8\},\{7,11\},\{8,9\},&\{9,10\},\{10,11\},\{11,12\}.
\end{align*}
The graph $G_t$ is obtained by taking $t$ copies of $G$ and connecting the leaves of consecutive copies. More precisely,
\[
V(G_t) = \{x_{1,1}, \ldots, x_{1,12}, \ldots, x_{t,1}, \ldots, x_{t,12}\},
\]
where the restriction of $G_t$ to $\{x_{i,1}, \ldots, x_{i,12}\}$ is isomorphic to $G$ for each $i$, and we add edges 
\[
\{x_{1,12}, x_{2,1}\}, \{x_{2,12}, x_{3,1}\}, \ldots, \{x_{t-1,12}, x_{t,1}\}.
\]
The graph $G_2$ is depicted in the following figure.

\begin{figure}
\centering
\begin{tikzpicture}[
    scale=1.2,
    vertex/.style={circle,draw,fill=white,inner sep=0.5pt,minimum size=18pt,font=\scriptsize},
    edge/.style={line width=0.8pt}
]

\def\rin{1.2}
\def\rout{2.2}
\def\rleaf{2.8}

% --- FIRST COPY (reflected labels) ---
\begin{scope}[shift={(-3.5,0)}]
    \foreach \i/\ang in {6/90, 5/162, 9/-126, 8/-54, 7/18}{
      \node[vertex] (x\i) at (\ang:\rin) {$x_{\i}$};
    }
    \node[vertex] (x4)  at (270:\rout) {$x_{4}$};
    \node[vertex] (x11) at (20:\rout)  {$x_{11}$};
    \node[vertex] (x10) at (-50:\rout) {$x_{10}$};
    \node[vertex] (x2)  at (160:\rout) {$x_{2}$};
    \node[vertex] (x3)  at (-130:\rout) {$x_{3}$};
    \node[vertex] (x12) at (20:\rleaf)  {$x_{12}$};
    \node[vertex] (x1)  at (160:\rleaf) {$x_{1}$};

    \draw[edge] (x12)--(x11);
    \draw[edge] (x11)--(x10);
    \draw[edge] (x11)--(x7);
    \draw[edge] (x10)--(x9);
    \draw[edge] (x10)--(x4);
    \draw[edge] (x9)--(x8);
    \draw[edge] (x9)--(x5);
    \draw[edge] (x8)--(x7);
    \draw[edge] (x8)--(x3);
    \draw[edge] (x7)--(x6);
    \draw[edge] (x6)--(x5);
    \draw[edge] (x6)--(x4);
    \draw[edge] (x5)--(x2);
    \draw[edge] (x4)--(x3);
    \draw[edge] (x3)--(x2);
    \draw[edge] (x2)--(x1);
\end{scope}

% --- SECOND COPY (reflected labels) ---
\begin{scope}[shift={(2.5,0)}]
    \foreach \i/\ang in {6/90, 5/162, 9/-126, 8/-54, 7/18}{
      \node[vertex] (y\i) at (\ang:\rin) {$y_{\i}$};
    }
    \node[vertex] (y4)  at (270:\rout) {$y_{4}$};
    \node[vertex] (y11) at (20:\rout)  {$y_{11}$};
    \node[vertex] (y10) at (-50:\rout) {$y_{10}$};
    \node[vertex] (y2)  at (160:\rout) {$y_{2}$};
    \node[vertex] (y3)  at (-130:\rout) {$y_{3}$};
    \node[vertex] (y12) at (20:\rleaf)  {$y_{12}$};
    \node[vertex] (y1)  at (160:\rleaf) {$y_{1}$};

    \draw[edge] (y12)--(y11);
    \draw[edge] (y11)--(y10);
    \draw[edge] (y11)--(y7);
    \draw[edge] (y10)--(y9);
    \draw[edge] (y10)--(y4);
    \draw[edge] (y9)--(y8);
    \draw[edge] (y9)--(y5);
    \draw[edge] (y8)--(y7);
    \draw[edge] (y8)--(y3);
    \draw[edge] (y7)--(y6);
    \draw[edge] (y6)--(y5);
    \draw[edge] (y6)--(y4);
    \draw[edge] (y5)--(y2);
    \draw[edge] (y4)--(y3);
    \draw[edge] (y3)--(y2);
    \draw[edge] (y2)--(y1);
\end{scope}

% --- BRIDGE ---
\draw[edge, red] (x12) -- (y1);

\end{tikzpicture}
\caption{A graph achieving the maximal diameter bound}
\label{fig:graph_max_diameter}
\end{figure}
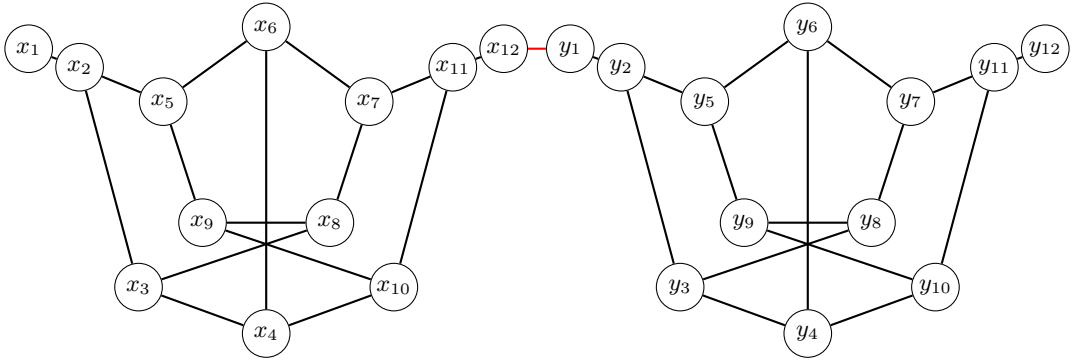

We have the following lemma.

\begin{lem}\label{lem_extreme_graph}
We have $\diam(G_t) = 7t - 1$ and $\mm \in \Ass(S/NI(G_t)^{t+1})$.
\end{lem}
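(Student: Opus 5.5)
The plan is to treat the two claims separately: the diameter is a distance computation, and the associated-prime claim follows from Lemma~\ref{lem_quotient} applied to $J = NI(G_t)$ and the power $t+1$. In both parts, the $t$ copies of $G$ behave independently except near the bridge edges $\{x_{i,12}, x_{i+1,1}\}$.

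For the diameter, I would first record the distances in $G$ itself. One checks that $\dist_G(1,12) = 6$, using for instance the path $1,2,3,8,7,11,12$, and that no shorter path exists. A short breadth-first search from $1$ gives the levels
\[
\{2\},\ \{3,5\},\ \{4,8,6,9\},\ \{10,7\},\ \{11\},\ \{12\}.
\]
I would then check that every vertex $v$ of $G$ satisfies $\dist_G(1,v) + \dist_G(v,12) \le$ something compatible with diameter, and in particular that $\diam(G) = 6$ is attained by the pair $\{1,12\}$. In $G_t$, the only edges between copies are the bridges, so every path between different copies passes through the cut vertices $x_{i,12}$ and $x_{i+1,1}$. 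Hence
\[
\dist_{G_t}(x_{1,1}, x_{t,12}) = 6t + (t-1) = 7t - 1.
\]
For an arbitrary pair $x_{i,a}, x_{j,b}$ with $i<j$, the distance equals
\[
\dist_G(a,12) + 1 + 7(j-i-1) + \dist_G(1,b) \le 6 + 1 + 7(t-2) + 6 = 7t-1,
\]
and within a single copy the distance is at most $6$. So $\diam(G_t) = 7t-1$.

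For the associated prime, I would take from \cite[Example 2.10]{HV} the monomial $f$ with $\deg_i(f) \le 1$ and $NI(G)^2 : f = \mm$. Let $f_i$ be its copy on the $i$-th block and set $F = f_1 \cdots f_t$. Then $F$ is squarefree, so $\deg(F) < t+1$ in every variable, and by Lemma~\ref{lem_quotient} it suffices to prove two things.

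\textbf{Membership.} I would show $x_v F \in NI(G_t)^{t+1}$ for every vertex $v$, say $v$ in block $i$. The idea is to write $x_v f_i$ as a product of two generators of $NI(G)$ and each $f_j$ with $j \ne i$ as a generator times a monomial. Then I would check that the generators used are still generators of $NI(G_t)$. This holds for all closed neighborhoods except $N[1]$, $N[2]$, $N[11]$, $N[12]$, which gain a bridge vertex. If such a generator appears, I would redo the decomposition by hand, absorbing the bridge variable from the adjacent block's factor $f_{i\pm1}$.

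\textbf{Non-membership.} I would show $F \notin NI(G_t)^{t+1}$. Since $F$ is squarefree, a product of $t+1$ generators dividing $F$ would consist of $t+1$ generators with pairwise disjoint supports inside $\supp(F)$. I would assign each such generator $x_{N[w]}$ to the block containing $w$. By pigeonhole, some block receives two generators. If both neighborhoods lie inside that block, they are (supersets of) neighborhoods in $G$ whose product divides $f_i$, contradicting $f_i \notin NI(G)^2$. The remaining case is a generator centered at a leaf or at a vertex adjacent to a bridge vertex, whose support crosses into a neighboring block; this is handled by a direct inspection of $\supp(f)$ near the vertices $1$ and $12$.

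I expect this boundary bookkeeping at the bridges to be the main obstacle. It arises in both the membership and the non-membership steps, because the neighborhoods $N[x_{i,1}]$ and $N[x_{i,12}]$ in $G_t$ differ from those in $G$. To handle it, I would first read off $\supp(f)$ from \cite{HV} and check whether it avoids or contains the leaves $1$ and $12$. If it contains them, I would modify $F$ at the bridges, for example by adjusting which of $x_{i,12}$ and $x_{i+1,1}$ appear, while keeping all exponents at most $1$.
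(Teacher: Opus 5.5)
The structure matches the paper's, but the membership step at the bridge vertices has a genuine gap.

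\textbf{What is correct.} Your diameter computation is fine, and more explicit than the paper's. Your $F$ coincides with the paper's, namely $f_i=x_{i,2}\cdots x_{i,11}$. Your pigeonhole argument for $F\notin NI(G_t)^{t+1}$ is also the paper's. The ``remaining case'' there is in fact vacuous. Since $\supp(F)$ contains no $x_{i,1}$ or $x_{i,12}$, every generator dividing $F$ is centered at some $x_{i,j}$ with $2\le j\le 11$, and has the same closed neighborhood as in $G$. Note also that $N[x_{i,2}]$ and $N[x_{i,11}]$ do not change in $G_t$. Only $N[x_{i,1}]$ for $i\ge 2$ and $N[x_{i,12}]$ for $i\le t-1$ change.

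\textbf{The gap.} You rightly single out membership at these bridge vertices as the obstacle, but the mechanism you propose cannot work. Suppose the decomposition of $x_1f$ imported from $G$ uses $N[1]$, as the obvious one $N[1]\sqcup N[9]$ does. Then for $v=x_{i,1}$ with $i\ge 2$, the generator $x_{N[x_{i,1}]}$ of $NI(G_t)$ requires $x_{i-1,12}$. This variable does not divide $F$, because $f_{i-1}=x_{i-1,2}\cdots x_{i-1,11}$, so there is nothing to absorb from $f_{i-1}$.

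Your fallback of putting $x_{i-1,12}$ into $F$ fails too. The sets $N[x_{i-1,3}]=\{x_{i-1,2},x_{i-1,3},x_{i-1,4},x_{i-1,8}\}$ and $N[x_{i-1,11}]=\{x_{i-1,7},x_{i-1,10},x_{i-1,11},x_{i-1,12}\}$ would then be disjoint and inside the support. So block $i-1$ alone would contribute two generators, and the modified $F$ would lie in $NI(G_t)^{t+1}$.

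\textbf{The missing idea.} The monomial $x_{i,1}f_i$ has a decomposition that avoids $N[x_{i,1}]$ altogether. The sets $N[x_{i,2}]=\{x_{i,1},x_{i,2},x_{i,3},x_{i,5}\}$ and $N[x_{i,10}]=\{x_{i,4},x_{i,9},x_{i,10},x_{i,11}\}$ are disjoint and contained in $\{x_{i,1}\}\cup\supp(f_i)$. Hence $x_{i,1}f_i\in NI(G_t)^2$ with no bridge variable needed. This is a specific property of $G$ that must be checked directly; it is not inherited from the case $t=1$. The vertices $x_{i,12}$ with $i\le t-1$ then follow from the automorphism $(1\ 12)(2\ 11)(3\ 7)(4\ 6)(5\ 10)$ of $G$, which yields the pair $x_{i,11},x_{i,5}$.
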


\begin{proof} Note that $\diam (G_1) = 6$. By construction, each edge $\{x_{i,12}, x_{i+1,1}\}$ is a bridge for $i = 1,\ldots, t-1$. Hence,
\[
\dist_{G_t}(x_{1,1}, x_{t,12}) = 6t + (t - 1) = 7t - 1,
\]
which is the maximum possible distance. Therefore, $\diam(G_t) = 7t - 1$.

Let $f_i = x_{i,2} \cdots x_{i,11}$ and $f = f_1 \cdots f_t$. Note that $N_{G_t}[\supp(f_i)] \cap N_{G_t}[\supp(f_j)] = \emptyset$ for all $i \neq j$. Since $f_i \in NI(G_t)$ but $f_i \notin NI(G_t)^2$, we deduce that $f \in NI(G_t)^t$ but $f \notin NI(G_t)^{t+1}$. It suffices to show that $x_{i,j} \in NI(G_t)^{t+1} : f$ for all $i = 1, \ldots, t$ and $j = 1, \ldots, 12$.

It is enough to verify that $x_{i,j} f \in NI(G_t)^{t+1}$, or equivalently, that $x_{i,j} f_i \in NI(G_t)^2$ for each copy. The case $t=1$ follows from \cite[Example 2.10]{HV}, so we assume that $t \ge 2$. 

For each $x_{i,1}$, we need to find two vertices $x_{i,k}$ and $x_{i,l}$ such that $N_{G_t}[x_{i,k}] \cap N_{G_t}[x_{i,l}] = \emptyset$ and 
\[
N_{G_t}[x_{i,k}] \cup N_{G_t}[x_{i,l}] \subseteq \{x_{i,1}\} \cup \supp(f_i).
\]
Indeed, for $x_{1,1}$, the vertices $x_{1,1}$ and $x_{1,9}$ satisfy the condition. For $x_{i,1}$ with $i = 2, \ldots, s$, the vertices $x_{i,2}$ and $x_{i,10}$ satisfy the condition. The vertices $x_{i,12}$ for $i = 1, \ldots, t$ can be handled similarly by symmetry. 

Now, for $x_{i,j}$ with $j = 2, \ldots, 11$, we need to show that there exist two vertices $x_{i,k}$ and $x_{i,l}$ such that 
\[
N[x_{i,k}] \cap N[x_{i,l}] = \{x_{i,j}\} \quad \text{and} \quad N[x_{i,k}] \cup N[x_{i,l}] \subseteq \supp(f).
\]
We observe that the following pairs of vertices satisfy this condition for $x_{i,j}$ with $j = 2, \ldots, 11$:
\[
\begin{aligned}
&j=2: \ x_{i,3}, x_{i,5}, \quad 
&j=3: \ x_{i,4}, x_{i,8}, \quad 
&j=4: \ x_{i,3}, x_{i,10}, \\
&j=5: \ x_{i,6}, x_{i,9}, \quad 
&j=6: \ x_{i,5}, x_{i,7}, \quad 
&j=7: \ x_{i,6}, x_{i,8}, \\
&j=8: \ x_{i,7}, x_{i,9}, \quad 
&j=9: \ x_{i,5}, x_{i,8}, \quad 
&j=10: \ x_{i,4}, x_{i,9}, \\
&j=11: \ x_{i,7}, x_{i,10}.
\end{aligned}
\]
The conclusion follows.
\end{proof}

\begin{rem}
The closed neighborhood ideal $NI(G_t)$ differs from the sum of the closed neighborhood ideals of each copy of $G_1$ in $G_t$. This difference arises from the connections at the endpoints of these copies.
\end{rem}

We now present a simple preparatory lemma and fix some notation for the proof of the main theorem.

\begin{lem}\label{lem_disjoint_ind}
Let $G$ be a connected graph with $\diam(G) = d$. Then there exist $\left\lfloor \frac{d}{3} \right\rfloor + 1$ vertices of $G$ whose closed neighborhoods are pairwise disjoint.
\end{lem}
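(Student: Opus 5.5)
Take a diametral path and select vertices along it spaced three apart. Since $\diam(G) = d$ and $G$ is connected, there are vertices $u, v$ with $\dist_G(u,v) = d$. Fix a shortest path $u = v_0, v_1, \ldots, v_d = v$. We work with the vertices $v_0, v_3, v_6, \ldots, v_{3k}$, where $k = \lfloor d/3 \rfloor$. Since $3k \le d$, these $k+1$ vertices all lie on the path.

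\textbf{Step 1: distances along a geodesic.} A subpath of a shortest path is itself a shortest path. Hence
\[
\dist_G(v_i, v_j) = |i - j| \quad \text{for all } 0 \le i, j \le d.
\]
If this failed for some pair, replacing the segment between $v_i$ and $v_j$ by a shorter path would give a $u$--$v$ walk of length less than $d$, a contradiction. It follows that any two distinct chosen vertices $v_{3a}, v_{3b}$ satisfy $\dist_G(v_{3a}, v_{3b}) \ge 3$.

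\textbf{Step 2: disjoint closed neighborhoods.} Suppose some $w$ lies in $N_G[v_{3a}] \cap N_G[v_{3b}]$. Then
\[
\dist_G(v_{3a}, v_{3b}) \le \dist_G(v_{3a}, w) + \dist_G(w, v_{3b}) \le 1 + 1 = 2,
\]
which contradicts Step 1. So the closed neighborhoods $N_G[v_0], N_G[v_3], \ldots, N_G[v_{3k}]$ are pairwise disjoint. This gives $\lfloor d/3 \rfloor + 1$ vertices with pairwise disjoint closed neighborhoods, as required.

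There is no real obstacle. The only point needing care is the geodesic property in Step 1, which ensures that vertices far apart along the path are also far apart in $G$.
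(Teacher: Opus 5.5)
Your proof is correct and is essentially the paper's argument: take every third vertex along a path with $d+1$ vertices. The one difference favors your version: the paper takes ``an induced path of maximum length,'' but disjointness needs the geodesic property $\dist_G(v_i,v_j)=|i-j|$ that you prove in Step 1, and an induced path need not have it. For instance, take the $5$-cycle $1,2,3,4,5$ with pendant vertices $9$ at $2$ and $8$ at $5$, which has diameter $4$. The induced path $2,3,4,5,8$ has first and fourth vertices sharing the neighbor $1$, and the induced path $9,2,3,4,5,8$ has more than $d+1$ vertices.
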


\begin{proof}
Let $v_1, \ldots, v_{d+1}$ be an induced path of maximum length in $G$. Let $a =  \lfloor \frac{d}{3} \rfloor$. Then the vertices $v_1, v_4, \ldots, v_{3a+1}$ satisfy $N_G[v_i] \cap N_G[v_j] = \emptyset$ for all $i \neq j$.
\end{proof}

\begin{defn}
Let $G$ be a simple graph and let $U, V \subseteq V(G)$.
\begin{enumerate}
    \item The \emph{closed neighborhood} of $U$ in $G$ is defined by   \(
    N_G[U] = \bigcup_{u \in U} N_G[u].
    \)
    \item The \emph{induced subgraph} of $G$ on $U$, denoted by $G[U]$, is the graph with vertex set $U$ and edge set $E(G) \cap (U \times U)$.
    \item The \emph{distance} between $U$ and $V$ is defined by
    \(
    \dist_G(U, V) = \min \{ \dist_G(u, v) \mid u \in U,\, v \in V \}.
    \)
\end{enumerate}
\end{defn}

In the following proof, we sometimes use $N[v]$ instead of $N_G[v]$ and $N[U]$ instead of $N_G[U]$ for simplicity of notation.
\begin{proof}[Proof of Theorem \ref{thm_diam}]
By Lemma \ref{lem_extreme_graph}, it suffices to show that if $\mm \in \Ass(S/NI(G)^t)$, then $\diam(G) \leq 7t - 8$. We therefore assume that $\mm \in \Ass(S/NI(G)^t)$. By Lemma \ref{lem_quotient}, there exists a monomial $f$ such that $\mm = NI(G)^t : f$. In particular, $f \notin NI(G)^t$. Let $U = \supp(f)$ and 
\[
W = \{u \in U \mid N_G[u] \subseteq U\}.
\]
For ease of reading, we divide the proof into several steps.

\medskip

\noindent \textbf{Step 1.} $W$ is nonempty and $N_G[W] = U$.

Clearly, $f \neq 1$, hence $U \neq \emptyset$. Let $u \in U$. Since $x_u f \in NI(G)^t$, there exist vertices $i_1, \ldots, i_t$ such that $x_{N[i_1]} \cdots x_{N[i_t]}$ divides $x_u f$. In particular, $N[i_j] \subseteq \supp(uf) = \supp(f) = U$ for all $j = 1, \ldots, t$. Hence, $i_1, \ldots, i_t \in W$, and therefore $W$ is nonempty.

Furthermore, since $f \notin NI(G)^t$, there exists some $j$ such that $u \in N[i_j]$. Thus, $u \in N_G[i_j] \subseteq N_G[W]$, and hence $N_G[W] = U$.

\medskip

\noindent \textbf{Step 2.} $N_G[U] = V(G)$.

If $U = V(G)$, there is nothing to prove. Let $v \notin U$. Then $x_v f \in NI(G)^t$, so there exist vertices $v_1, \ldots, v_t$ such that $x_{N[v_1]} \cdots x_{N[v_t]}$ divides $x_v f$. Note that this product does not divide $f$, so there exists some $j$ such that $v \in N[v_j]$.

Since $v \notin U$, we have $\deg_v(x_v f) = 1$. Hence,
\[
N_G[v_j] \setminus \{v\} \subseteq \supp(f) = U.
\]
If $v_j \neq v$, then $v_j \in U$. Hence, $v \in N_G[v_j] \subseteq N_G[U]$. If $v_j = v$, then there exists $u_j \in N_G[v] \setminus \{v\}$. Then $u_j \in U$, which also implies that $v \in N_G[u_j] \subseteq N_G[U]$. Thus, $N_G[U] = V(G)$.

\medskip

\noindent \textbf{Step 3.} Assume that $G[W]$ has $s$ connected components $W_1, \ldots, W_s$. Let $U_i = N_G[W_i]$ and $V_i = N_G[U_i]$. Then $\dist_G(V_i, V_j) \leq s - 1$ for all $i, j = 1, \ldots, s$, where by convention $\dist_G(U, \emptyset) = 0$ for any subset $U \subseteq V(G)$.

We may assume that $s \geq 2$. Since $G$ is connected, the sets $V_i$ are pairwise connected through the graph. As there are $s$ components, it follows that $\dist_G(V_i, V_j) \leq s - 1$ for all $i, j$.

\medskip

\noindent \textbf{Step 4.} Final step.

Let $\diam(W_i) = a_i$ for $i = 1, \ldots, s$. By Step 1, $\diam(U_i) \leq \diam(W_i) + 2$. By Step 2, $\diam(V_i) \leq \diam(U_i) + 2 \leq \diam(W_i) + 4$. By Step 3,
\[
\diam(G) \leq \sum_{i=1}^s \diam(W_i) + 4s + (s - 1).
\]

Assume that $\diam(W_i) = a_i$. By Lemma \ref{lem_disjoint_ind}, we can choose $\left\lfloor \frac{a_i}{3} \right\rfloor + 1$ vertices in $W_i$ whose closed neighborhoods are pairwise disjoint. Since $f \notin NI(G)^t$, there can be at most $t - 1$ such vertices in total. Therefore,
\[
\sum_{i=1}^s \left(\left\lfloor \frac{a_i}{3} \right\rfloor + 1\right) \leq t - 1.
\]

Under this constraint, the sum $\sum_{i=1}^s a_i$ is maximized when each $a_i$ is of the form $a_i = 3b_i + 2$. The condition then becomes $\sum_{i=1}^s b_i \leq t - s - 1$. Hence,
\[
\diam(G) \leq \sum_{i=1}^s a_i + 5s - 1 \leq 3(t - s - 1) + 2s + 5s - 1 = 3t + 4s - 4.
\]
Since $s \leq t - 1$, we conclude that $\diam(G) \leq 7t - 8$.
\end{proof}

\begin{rem}
By a result of Brodmann \cite{Br}, there exists an integer $t_0$ such that $\Ass(S/I^t) = \Ass(S/I^{t_0})$ for all $t \ge t_0$. The smallest such integer is called the stability index of the associated primes of powers of $I$, denoted by $\astab(I)$. 

With the notation as in Lemma~\ref{lem_extreme_graph} and Theorem~\ref{thm_diam}, we deduce that $\mm \in \Ass(S/NI(G_t)^{t+1})$ but $\mm \notin \Ass(S/NI(G_t)^t)$. In particular, $\astab(NI(G_t)) \ge t$. Hence, this provides a family of graphs $G$ for which $\astab(NI(G))$ can be arbitrarily large.
\end{rem}

We conclude the paper with a remark on an analogous result concerning bounds on the diameter of graphs whose maximal homogeneous ideal is an associated prime of powers of their edge ideals; this serves as a motivation for our work. Note that this result can be deduced from the work of Hien, Lam, and Trung \cite{HLT} and Lam and Trung \cite{LT}. Recall that the edge ideal of $G$, denoted by $I(G)$, is generated by all monomials $x_i x_j$ such that $\{i,j\}$ is an edge of $G$.

\begin{thm}
Let $G$ be a connected simple graph and let $t \ge 2$ be an integer. Assume that $\mm$ is an associated prime of $S/I(G)^t$. Then
\[
\diam(G) \le 4t - 5,
\]
and the bound is sharp.
\end{thm}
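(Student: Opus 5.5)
Both parts of the statement come from the preceding machinery with $NI(G)$ replaced by the edge ideal $I(G)$. For the upper bound I would reuse the skeleton of the proof of Theorem~\ref{thm_diam}. Assume $\mm\in\Ass(S/I(G)^t)$. By Lemma~\ref{lem_quotient} there is a monomial $f\notin I(G)^t$ with $I(G)^t:f=\mm$.

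Set $U=\supp(f)$ and let $W\subseteq U$ be the set of vertices covered by edges $\{i,j\}\subseteq U$ that occur in factorizations witnessing $x_uf\in I(G)^t$. The steps are as follows.

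\textbf{Step 1.} Every $u\in U$ lies on such an edge. Indeed, $x_uf\in I(G)^t$ but $f\notin I(G)^t$, so some edge in the factorization must use the extra copy of $x_u$.

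\textbf{Step 2.} Every vertex outside $U$ is adjacent to $U$. If $v\notin U$, then $x_vf\in I(G)^t$ forces an edge $\{v,u\}$ with $u\in U$. Hence $N_G[U]=V(G)$.

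\textbf{Step 3.} Decompose $G[U]$ into connected components $U_1,\dots,U_s$. By connectivity of $G$ and Step~2, consecutive components are at distance at most $3$, through a single vertex outside $U$ or a direct edge. This is the edge-ideal analogue of Step~3 of the theorem.

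\textbf{Step 4.} Bound the diameters of the components. An induced path of length $a$ in $U_i$ contains $\lceil a/2\rceil$ pairwise disjoint edges, i.e.\ an induced matching-type collection. The key counting constraint is that the total number of disjoint edges obtainable is at most $t-1$, since otherwise $f\in I(G)^t$. A component with no edges is impossible by Step~1.

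For this count I would invoke the known characterization from \cite{HLT,LT}. There, $\mm\in\Ass(S/I(G)^t)$ is controlled by the matching structure, roughly a matching number bound $\nu\le t-1$ together with odd-cycle or non-bipartite conditions. From it one obtains that each component contributing diameter $a_i$ consumes about $a_i/2$ of the $t-1$ available edges. Summing, $\diam(G)\le\sum a_i+2s+(s-1)$, plus the outer layer contributing $2$, and the optimization over the $a_i$ should yield $4t-5$.

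The main obstacle is this bookkeeping. A naive matching count gives the wrong constant, so I would rely on \cite[Proposition 2.6]{HLT} to pin down the local structure. Specifically, the component carrying the "extra" factor must be non-bipartite, which costs extra edges and tightens the count to $4t-5$.

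For sharpness, I would take an odd cycle (a triangle) on which $\mm\in\Ass(S/I^2)$ holds, with diameter $3=4\cdot2-5$ after attaching pendant vertices. Then I would chain $t-1$ such gadgets by paths, in the same way the copies are glued in $G_t$ of Lemma~\ref{lem_extreme_graph}, each gadget adding $4$ to the diameter. Membership of $\mm$ in the associated primes would be verified as in Lemma~\ref{lem_extreme_graph}, using $f=\prod f_i$ with pairwise disjoint neighborhoods and checking copy by copy.
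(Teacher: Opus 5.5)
Your overall skeleton matches the paper's: $N_G[U]=V(G)$, the components $U_1,\dots,U_s$ of $G[U]$, the estimate $\diam(G)\le\sum_i(\diam(U_i)+2)+(s-1)$, and $s\le t-1$. Your sharpness example (triangles with two pendant leaves, chained by bridges) is exactly the paper's. The gap is in Step 4. The one estimate that produces the constant $4t-5$ is never stated: ``consumes about $a_i/2$ edges'' and ``should yield $4t-5$'' hide precisely the $-1$ that matters.

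The matching count you actually write down gives only $\lceil a_i/2\rceil\le\nu(G[U_i])$, i.e.\ $a_i\le 2\nu_i$. With $\sum_i\nu_i\le t-1$ and $s\le t-1$ this yields only $\diam(G)\le 2(t-1)+3(t-1)-1=5t-6$. Your proposed fix, that ``the component carrying the extra factor'' is non-bipartite, is not enough. Your constraints still allow $t-2$ components that are paths with two edges plus one triangle, giving $2t-3+3(t-1)-1=5t-7>4t-5$ for $t\ge3$.

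What is needed is that \emph{every} component of $G[\supp f]$ contains an odd cycle, which is \cite[Lemmas 2.8 and 2.9]{HLT}. The paper uses this to get $\diam(U_i)\le n_i-2$ with $n_i=|U_i|$. It combines this with $\sum_i n_i\le 3(t-1)$ from \cite[Proposition 2.6]{HLT}, giving $\diam(G)\le\sum_i n_i+s-1\le 3t+s-4\le 4t-5$.

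Also, your ``plus the outer layer contributing $2$'' double counts. The expression $\sum_i a_i+2s+(s-1)$ already includes both boundary layers of each $N_G[U_i]$, and adding $2$ more would cap you at $4t-3$ at best. A minor point: distinct components of $G[U]$ are never joined by a direct edge; the link passes through one or two vertices outside $U$, which is why the jump is at most $3$.

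Your matching-number route can be completed, and would then avoid \cite[Proposition 2.6]{HLT}, if you add two things. First, use the odd-cycle lemma for all components. Second, prove $\diam(H)\le 2\nu(H)-1$ for every connected non-bipartite graph $H$. For this, let $P=v_0\cdots v_a$ be a shortest path with $a=\diam(H)$.
\begin{itemize}
\item If $a$ is odd, $P$ has a perfect matching, so $\nu(H)\ge(a+1)/2$.
\item If $a$ is even and $\nu(H)=a/2$, then no edge of $H$ avoids $V(P)$. Also no vertex off $P$ is adjacent to any $v_{2j}$, since otherwise you could extend a perfect matching of $P-v_{2j}$.
\item Since $P$ is induced, $H$ would then be bipartite with parts $\{v_{2j}\}\cup(V(H)\setminus V(P))$ and $\{v_{2j+1}\}$, a contradiction.
\end{itemize}
With this inequality, $\sum_i a_i\le 2(t-1)-s$, hence $\diam(G)\le\sum_i a_i+3s-1\le 2t-3+2s\le 4t-5$.
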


\begin{proof}
Let $f = x_1^{a_1} \cdots x_n^{a_n}$ be such that $I(G)^t : f = \mm$. Let $U = \supp(f)$. Assume that $U_1, \ldots, U_s$ are the connected components of $G[U]$ with $|U_i| = n_i$. Let $V_i = N_G[U_i]$. Then $\diam(V_i) \le \diam(U_i) + 1$. 

By \cite[Corollary 2.2]{HLT}, $N_G[U] = V(G)$; hence $V_1 \cup \cdots \cup V_s = V(G)$. Since $G$ is connected, $\dist_G(V_i, V_j) \le s - 1$. By \cite[Lemma 2.8 and Lemma 2.9]{HLT}, each $U_i$ contains an odd cycle. In particular, $\diam(U_i) \le n_i - 2$. By \cite[Proposition 2.6]{HLT}, we have $\sum_{i=1}^s n_i \le 3(t - 1)$. Hence,
\begin{align*}
\diam(G) 
&\le \sum_{i=1}^s (\diam(U_i) + 2) + s - 1 \\
&\le \sum_{i=1}^s n_i + s - 1 \le 3t + s - 4.
\end{align*}
Since $f \notin I(G)^t$, we deduce that $s \le t - 1$. Hence, $\diam(G) \le 4t - 5$.

We now construct a graph $G_t$ attaining the maximal bound. Let $G_2$ be the graph on $5$ vertices $\{1,2,3,4,5\}$ with edges $\{1,2\}, \{2,3\}, \{3,4\}, \{2,4\}, \{4,5\}$. In other words, $G_2$ is a triangle with two leaves attached to two vertices of the triangle. Then $G_t$ is obtained by connecting $(t-1)$ copies of $G_2$ at their endpoints. Namely, $G_t$ is the graph with vertex set
\[
V(G_t) = \{x_{i,j} \mid i = 1, \ldots, t-1,\ j = 1, \ldots, 5\},
\]
such that the induced subgraph of $G_t$ on $\{x_{i,1}, \ldots, x_{i,5}\}$ is isomorphic to $G_2$, and there is an edge between $x_{i,5}$ and $x_{i+1,1}$ for $i = 1, \ldots, t - 2$. Then one can easily check that $\diam(G_t) = 4t - 5$ and $\mm \in \Ass(S/I(G_t)^{t})$. This completes the proof.
\end{proof}

\vspace{1mm}
\subsection*{Data availability}

Data sharing is not applicable to this article, as no datasets were generated or analyzed during the current study.

\subsection*{Conflict of interest}

The authors have no relevant financial interests to disclose.

\subsection*{Acknowledgments}
Ha Thi Thu Hien is partially supported by Foreign Trade University under research program number FTURP02-2026-13.

\end{document}